\documentclass[12pt,twoside,reqno]{amsart}
\usepackage{amsxtra,amscd}
\usepackage{graphicx}
\usepackage{amsmath}
\usepackage{amsfonts}
\usepackage{amssymb}

\setcounter{MaxMatrixCols}{10}

\newtheorem{theorem}{Theorem}[section]
\newtheorem{lemma}[theorem]{Lemma}
\theoremstyle{definition}
\newtheorem{definition}[theorem]{Definition}

\newtheorem{remark}[theorem]{Remark}

\newtheorem{corollary}[theorem]{Corollary}

\numberwithin{equation}{section}

\begin{document}
\title{on the arithmetic fundamental groups}
\author{Feng-Wen An}
\address{School of Mathematics and Statistics, Wuhan University, Wuhan,
Hubei 430072, People's Republic of China}
\email{fwan@amss.ac.cn}
\subjclass[2000]{Primary 14F35; Secondary 11G35}
\keywords{arithmetic scheme, automorphism group, \'{e}tale fundamental group, quasi-galois}

\begin{abstract}
In this paper we will define a qc fundamental group for an arithmetic scheme by quasi-galois closed covers. Then we will give a computation for such a group and will prove that the \'{e}tale fundamental group of an arithmetic scheme is a normal subgroup in our qc fundamental group, which make up the main theorem of the paper. Hence, our group gives us a prior estimate of the \'{e}tale fundamental group. The quotient group reflects the topological properties of the scheme.
\end{abstract}

\maketitle

\begin{center}
{\tiny {Contents} }
\end{center}

{\tiny \qquad {Introduction} }

{\tiny \qquad {1. Definition and Notation} }

{\tiny \qquad {2. Statement of The Main Theorem} }

{\tiny \qquad {3. Proof of The Main Theorem}}

{\tiny \qquad {References}}

\section*{Introduction}

One  has been
 used the related data of arithmetic varieties $X/Y$ to describe
a given Galois extension $E/F$ for many years (for example, see \cite
{Bloch,Kato,Schmidt,Raskind,Saito,VS1,w1,w2}). It has been seen that there is a nice relationship
 between the Galois group $Gal(E/F)$ and the automorphism group $Aut(X/Y)$ for the case that $E/F$ are canonically the function fields $k(X)/k(Y)$.

 Here, one says that the arithmetic varieties $X/Y$ are a
\emph{geometric model} for the Galois extensions $E/F$ if the Galois group $
Gal\left( E/F\right) $ is isomorphic to the automorphism group ${Aut}\left(
X/Y\right) $ (for example, see \cite{SGA1,Raskind,VS1,SV2}).

Among the whole of the invariants on arithmetic schemes, it has been seen that the \'{e}tale fundamental groups are a very important tool for one to obtain class fields since these groups encode the
whole of the
information of class fields of the number fields or function fields. So, it is a natural task how to compute the \'{e}tale fundamental groups.

In this paper we will define a qc fundamental groups for an arithmetic scheme
 by quasi-galois closed covers in a manner similar to an \'{e}tale fundamental group of the scheme.
Here, as behave like Galois extensions, the quasi-galois closed covers have many desired properties of the arithmetic schemes (for example, see \cite{An2,An3}).

In deed, quasi-galois closed covers can be
regarded as a generalization of the pseudo-galois covers of arithmetic
varieties in the sense of Suslin-Voevodsky (see \cite{VS1,SV2}).

In \emph{Theorem 2.2}, the main theorem of the paper,
we will give a computation for  a qc fundamental group of an arithmetic scheme; we will also prove that the \'{e}tale fundamental group of the scheme is a normal subgroup in our qc fundamental group. Hence, by our groups we will obtain a prior estimate of the \'{e}tale fundamental groups.

On the other hand, their quotient group reflect the topological properties of the scheme. It will be seen that an arithmetic scheme has no finite branched cover if and only if the quotient group is trivial.

\textbf{Acknowledgment.} The author would like to express his sincere
gratitude to Professor Li Banghe for his invaluable advice and instructions
on algebraic geometry and topology.

\section{Definition and Notation}

\subsection{Convention}

In this paper, an \textbf{arithmetic variety}
is an integral scheme surjectively over $Spec\left( \mathbb{Z}\right) $ of finite type; an \textbf{integral variety} is an  integral
scheme surjectively over $Spec\left( \mathbb{Z}\right) $.
Let $k(X)$ denote the function field of an integral scheme $X$.

For an integral domains $D$, we will let $Fr(D)$ denote the field of
fractions on $D$. In particular, the field $Fr(D)$ will always assumed to be
contained in $\Omega $ if $D$ is a subring of a field $\Omega $.

For a field $K,$ we let $K^{al}$ denote the algebraically closed closure of $%
K.$ Define $K^{un}$ to be the union of all the finite unramified
subextensions $E$ ($\subseteq K^{al}$) over $K.$

For an arithmetic variety $X$, let $\pi _{1}^{et}\left( X;s\right) $ denote
the \'{e}tale fundamental group of $X$ for a given geometric point $s$ over $%
X$ (for detail see \cite{F-K,SGA1,Mln}).

\subsection{Affine covering with values in a field}

For convenience, let us fix notation and recall some definitions in the
paper (for details, see \cite{An1,An2,An3}). Fixed a scheme $(X,\mathcal{O}%
_{X})$.

An \textbf{affine covering} of $(X,\mathcal{O}_{X})$ is a family $\mathcal{C}%
_{X}=\{(U_{\alpha },\phi _{\alpha };A_{\alpha })\}_{\alpha \in \Delta }$
such that $\phi _{\alpha }$ is an isomorphism from an open set $U_{\alpha }$
of $X$ onto the spectrum $Spec{A_{\alpha }}$ of a commutative ring $%
A_{\alpha }$ for any $\alpha \in \Delta $. Each $(U_{\alpha },\phi _{\alpha
};A_{\alpha })\in \mathcal{C}_{X}$ is called a \textbf{local chart}.
Moreover, $\mathcal{C}_{X}$ is said to be \textbf{reduced} if $U_{\alpha
}\neq U_{\beta }$ holds for any $\alpha \neq \beta $ in $\Delta $.

In particular, an \textbf{affine patching} of $(X,\mathcal{O}_{X})$ is an
affine covering $\{(U_{\alpha },\phi _{\alpha };A_{\alpha })\}_{\alpha \in
\Delta }$ of $(X,\mathcal{O}_{X})$ such that $\phi _{\alpha }$ is the
identity map on $U_{\alpha }=SpecA_{\alpha }$ for each $\alpha \in \Delta .$
Evidently, an affine patching is reduced.

Let $\mathfrak{Comm}$ be the category of commutative rings with identity.
Fixed a subcategory $\mathfrak{Comm}_{0}$ of $\mathfrak{Comm}$. An affine
covering $\{(U_{\alpha},\phi_{\alpha};A_{\alpha })\}_{\alpha \in \Delta}$ of
$(X, \mathcal{O}_{X})$ is said to be \textbf{with values} in $\mathfrak{Comm}%
_{0}$ if
 for each $\alpha \in \Delta $ there are $\mathcal{O}_{X}(U_{\alpha})=A_{\alpha}$ and $U_{\alpha}=Spec(A_{\alpha})$, where
 $A_{\alpha }$ is a ring contained in $\mathfrak{Comm}_{0}$.

Let $\Omega $ be a field and let $\mathfrak{Comm}(\Omega )$ be the category
consisting of the subrings of $\Omega $ and their isomorphisms. An affine
covering $\mathcal{C}_{X}$ of $(X,\mathcal{O}_{X})$ with values in $%
\mathfrak{Comm}(\Omega )$ is said to be \textbf{with values in the field $%
\Omega $}.

Let $\mathcal{O}_{X}$ and $\mathcal{O}^{\prime}_{X}$ be two structure sheaves on the underlying space of an integral scheme $X$. The two integral schemes $(X,\mathcal{O}_{X})$ and $(X, \mathcal{O}^{\prime}_{X})$ are said to be \textbf{essentially equal} provided that for any open set $U$ in $X$, we have
 $$U \text{ is affine open in }(X,\mathcal{O}_{X}) \Longleftrightarrow \text{ so is }U \text{ in }(X,\mathcal{O}^{\prime}_{X})$$ and in such a case,  $D_{1}=D_{2}$ holds or  there is $Fr(D_{1})=Fr(D_{2})$ such that for any nonzero $x\in Fr(D_{1})$, either $$x\in D_{1}\bigcap D_{2}$$ or $$x\in D_{1}\setminus D_{2} \Longleftrightarrow x^{-1}\in D_{2}\setminus D_{1}$$ holds, where $D_{1}=\mathcal{O}_{X} (U)$ and $D_{2}=\mathcal{O}^{\prime}_{X} (U)$.

 Two schemes $(X,\mathcal{O}_{X})$ and $(Z,\mathcal{O}_{Z})$ are said to be \textbf{essentially equal} if the underlying spaces of $X$ and $Z$ are equal and the schemes $(X,\mathcal{O}_{X})$ and $(X,\mathcal{O}_{Z})$ are essentially equal.

\subsection{Galois extensions}

Let $K$ be a finitely generated extension of a field $k$. Here $K$ is not
necessarily algebraic over $k$. As usual, $Gal\left( K/k\right) $ denotes
the Galois group of $K$ over $k$.

The field $K$ is said to be a \textbf{Galois extension} of $k$ if $k$ is the
fixed subfield of the Galois group $Gal\left( K/k\right) $ in $K$.

\subsection{Quasi-galois closed}

Let $X$ and $Y$ be two integral varieties and let $f:X\rightarrow Y$ be a
surjective morphism. Denote by $Aut\left( X/Y\right) $ the group of
automorphisms of $X$ over $Z$.

By a \textbf{conjugate} $Z$ of $X$ over $Y$, we will understand an integral
variety $Z$ that is isomorphic to $X$ over $Y$.

\begin{definition}
$X$ is said to be \textbf{quasi-galois closed} over $Y$ by $f$ if there is an algebraically closed field $\Omega$
and a reduced affine covering $\mathcal{C}_{X}$ of $X$ with values in $
\Omega $ such that for any conjugate $Z$ of
$X$ over $Y$ the two conditions are satisfied:
\begin{itemize}
\item $(X,\mathcal{O}_{X})$ and $(Z,\mathcal{O}_{Z})$ are essentially equal if $Z$ has a reduced
affine covering with values in $\Omega$.

\item $\mathcal{C}_{Z}\subseteq \mathcal{C}_{X}$ holds if $\mathcal{C}_{Z}$
is a reduced affine covering of $Z$ with values in $\Omega $.
\end{itemize}
\end{definition}

\begin{remark}
It is seen that a quasi-galois closed variety $X$ has only one conjugate in
the algebraically closed field $\Omega =k\left( X\right) ^{al}$. That is,
let $Z$ and $Z^{\prime }$ be conjugates of $X.$ Then  $Z$ and $Z^{\prime }$ must be essentially equal if $Z$ and $Z^{\prime }$ both have reduced affine coverings
with values in $\Omega .$
\end{remark}

\section{Statement of The Main Theorem}

\subsection{Definition for qc fundamental group}

Let $X$ be an arithmetic variety. Fixed an algebraically closed field $%
\Omega $ such that the function field $k\left( X\right) $ is contained in $%
\Omega .$ Here, $\Omega $ is not necessarily algebraic over $k\left(
X\right) .$

Define $X_{qc}\left[ \Omega \right] $ to be the set of arithmetic varieties $%
Z$ satisfying the following conditions: $\left( i\right) $ $Z$ has a reduced
affine covering with values in $\Omega $; $\left( ii\right) $ there is a
surjective morphism $f:Z\rightarrow X$ of finite type such that $Z$ is
quasi-galois closed over $X.$

Set a partial order $\leq$ in the set $X_{qc}\left[ \Omega \right] $ in such
a manner:

Take any $Z_{1},Z_{2}\in X_{qc}\left[ \Omega \right] ,$ we say
\begin{equation*}
Z_{1}\leq Z_{2}
\end{equation*}
if there is a surjective morphism $\varphi :Z_{2}\rightarrow Z_{1}$ of
finite type such that $Z_{2}$ is quasi-galois closed over $Z_{1}.$

By \emph{Lemmas 3.6,3.8-10} below, it is seen that $X_{qc}\left[ \Omega %
\right] $ is a directed set and
\begin{equation*}
\{Aut\left( Z/X\right) :Z\in X_{qc}\left[ \Omega \right] \}
\end{equation*}%
is an inverse system of groups. Hence, we have the following definition.

\begin{definition}
Let $X$ be an arithmetic variety and let $\Omega $ be an algebraically
closed field $\Omega $ such that $\Omega \supseteq k\left( X\right) .$ The
inverse limit
\begin{equation*}
\pi _{1}^{qc}\left( X;\Omega \right) \triangleq {\lim_{\longleftarrow}}
_{Z\in X_{qc}\left[ \Omega \right] }{Aut\left( Z/X\right)}
\end{equation*}
of the inverse system $\{Aut\left( Z/X\right) :Z\in X_{qc}\left[ \Omega %
\right] \}$ of groups is said to be the \textbf{qc fundamental group} of the
scheme $X$ with coefficient in $\Omega .$
\end{definition}

\subsection{Statement of the main theorem}

The following is the main theorems of the paper.

\begin{theorem}
Let $X$ be an arithmetic variety. Take any algebraically closed field $%
\Omega $ such that $\Omega \supseteq k\left( X\right) .$ Then we have the
following statements.

$\left( i\right) $ There is a group isomorphism
\begin{equation*}
\pi _{1}^{qc}\left( X;\Omega \right) \cong Gal\left( {\Omega }/k\left(
X\right) \right) .
\end{equation*}

$\left( ii\right) $ Take any geometric point $s$ of $X$ over $\Omega $. Then
there is a group isomorphism
\begin{equation*}
\pi _{1}^{et}\left( X;s\right) \cong \pi _{1}^{qc}\left( X;\Omega \right)
_{et}
\end{equation*}%
where $\pi _{1}^{qc}\left( X;\Omega \right) _{et}$ is a subgroup of $\pi
_{1}^{qc}\left( X;\Omega \right) $. Moreover, $\pi _{1}^{qc}\left( X;\Omega
\right) _{et}$ is a normal subgroup of $\pi _{1}^{qc}\left( X;\Omega \right)
$.\bigskip
\end{theorem}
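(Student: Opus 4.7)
My plan is to reduce both parts to classical Galois theory of fields, exploiting the fact that the quasi-galois closed condition is engineered to be the scheme-theoretic analog of a Galois field extension.

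For part (i), the first step is to establish an order-preserving bijection between the directed poset $X_{qc}\left[\Omega\right]$ (modulo essential equality) and the directed poset of finite Galois subextensions of $\Omega$ over $k(X)$, given by $Z \mapsto k(Z)$. Simultaneously I would identify $\mathrm{Aut}(Z/X)$ with $\mathrm{Gal}(k(Z)/k(X))$ through the pullback action on function fields: injectivity is automatic since $Z$ is integral, while surjectivity uses the quasi-galois closed property and Remark 1.2 -- every Galois conjugate of $Z$ inside $\Omega$ is essentially equal to $Z$, hence each $\tau \in \mathrm{Gal}(k(Z)/k(X))$ lifts to an automorphism $\sigma_\tau$ of $Z/X$. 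For the reverse direction of the bijection, given a finite Galois $L \subseteq \Omega$ over $k(X)$, form the normalization of $X$ in $L$ with the canonical reduced affine covering inherited from $\Omega$, and verify using the cited results of \cite{An2,An3} that it lies in $X_{qc}\left[\Omega\right]$. Passing to the inverse limit yields
\[
\pi_{1}^{qc}(X;\Omega) \;\cong\; \varprojlim_{L}\mathrm{Gal}(L/k(X)) \;\cong\; \mathrm{Gal}(\Omega/k(X)),
\]
where the second isomorphism uses that $\Omega$ is the union of its finite Galois subextensions over $k(X)$, which holds by hypothesis since $k(X)$ must be the fixed subfield of $\mathrm{Gal}(\Omega/k(X))$ in the sense of \S1.3.

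For part (ii), I would first invoke Grothendieck's classical identification $\pi_{1}^{et}(X;s) \cong \mathrm{Gal}(k(X)^{un}/k(X))$, valid once $X$ is normal: finite étale Galois covers of $X$ correspond to finite unramified Galois subextensions of $k(X)^{un}/k(X)$ picked out by the geometric point $s$. Next I would interpret $\pi_{1}^{qc}(X;\Omega)_{et}$ as the piece of $\pi_{1}^{qc}(X;\Omega)$ carved out by the unramified subfield $k(X)^{un} \subseteq \Omega$; under the isomorphism of (i), normality then follows formally from the fact that $k(X)^{un}/k(X)$ is itself Galois, so the subgroup/quotient it determines in $\mathrm{Gal}(\Omega/k(X))$ is automatically normal.

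The main obstacle I foresee is precisely the claim that $\pi_{1}^{qc}(X;\Omega)_{et}$ sits as a genuine \emph{subgroup} of $\pi_{1}^{qc}(X;\Omega)$ rather than merely a quotient. On the Galois side, restricting the inverse system indexed by $X_{qc}\left[\Omega\right]$ to those $Z$ that are étale over $X$ a priori yields a quotient of $\pi_{1}^{qc}(X;\Omega)$ (namely $\mathrm{Gal}(k(X)^{un}/k(X))$), not a subgroup; turning this into an honest embedding requires either producing a canonical splitting of the restriction map or reinterpreting the étale part inside the qc group as a distinguished subobject (for instance, via a kernel description of the complementary transcendental/ramified piece). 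I expect the real technical work to sit here, in the paper's Lemmas 3.6 and 3.8--3.10, and in the structural features of quasi-galois closed covers established in \cite{An2,An3}; the formal Galois correspondence alone is not enough to deliver this step.
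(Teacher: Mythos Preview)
Your plan for part~(i) is essentially the paper's own argument: the paper also passes through the identification $\mathrm{Aut}(Z/X)\cong\mathrm{Gal}(k(Z)/k(X))$ (its Lemma~3.7) and takes the inverse limit, using the existence construction of \S3.2 (Lemma~3.6) to show that every relevant Galois extension arises as some $k(Z)$. One correction: you index by \emph{finite} Galois subextensions of $\Omega/k(X)$, but the paper explicitly allows $\Omega$ to be transcendental over $k(X)$ and the morphisms $Z\to X$ are only of finite type, so $k(Z)/k(X)$ is in general merely finitely generated Galois in the sense of \S1.3. Replace ``finite'' by ``finitely generated Galois'' throughout and your cofinality step survives. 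The paper also uses its bespoke \S3.2 gluing rather than normalization to produce $Z$ from $L$, but this is cosmetic.

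For part~(ii) you have located exactly the sensitive point, and you should be aware that the paper's proof does \emph{not} supply the splitting you anticipate. The paper argues that $X_{et}[s]$ and $X_{qc}[\Omega;s]=X_{qc}[\Omega]\cap X_{et}[s]$ are cofinal, deduces $\pi_1^{et}(X;s)\cong\mathrm{Gal}(k(X)^{un}/k(X))$, and then simply asserts that this group ``is a subgroup of $\pi_1^{qc}(X;\Omega)$'' and is normal ``since $k(X_{\Omega_{et}})$ is a subfield of $k(X_{\Omega})$ and $k(X_{\Omega_{et}})$ is Galois over $k(X)$.'' As you correctly point out, classical Galois theory makes $\mathrm{Gal}(k(X)^{un}/k(X))$ the \emph{quotient} of $\mathrm{Gal}(\Omega/k(X))$ by the normal subgroup $\mathrm{Gal}(\Omega/k(X)^{un})$, not an embedded subgroup; the paper provides no mechanism (no section, no alternative kernel description) to turn this into an inclusion. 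So your diagnosis of the obstacle is accurate, but do not look to Lemmas~3.6 or~3.8--3.10 for the missing ingredient: the paper leaves this step at the level of assertion.
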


\begin{remark}
Let $X$ be an arithmetic variety. Put $$\pi _{1}^{qc}\left( X \right)=\pi _{1}^{qc}\left( X;{k(X)}^{al} \right).$$ Then there is a group
isomorphism
$$
\pi _{1}^{qc}\left( X \right) \cong Gal\left( {k(X)}^{al}/k\left( X\right)
\right).
$$
\end{remark}

\begin{definition}
Let $X$ be an arithmetic variety. The quotient group
\begin{equation*}
\pi _{1}^{br}\left( X\right) =\pi _{1}^{qc}\left( X;k\left( X\right)
^{al}\right) /\pi _{1}^{qc}\left( X;k\left( X\right) ^{al}\right) _{et}
\end{equation*}
is said to be the \textbf{branched group} of the arithmetic variety $X$.
\end{definition}

The branched group $\pi _{1}^{br}\left( X\right)$ can reflect the
topological properties of the scheme $X,$ especially the properties of the
associated complex space $X^{an}$ of $X,$ for example, the branched covers
of $X^{an}$. In fact, we have the following corollary.

\begin{corollary}
Let $X$ be an arithmetic variety. Then we have
\begin{equation*}
\pi _{1}^{br}\left( X\right)=\{0\}
\end{equation*}
if and only if $X$ has no finite branched cover.
\end{corollary}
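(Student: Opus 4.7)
The plan is to transfer both sides of the equivalence to statements about the inverse system $X_{qc}\left[ k(X)^{al}\right]$ via the main theorem, so that the corollary reduces to a direct comparison between quasi-galois closed covers and finite branched covers of $X$.

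First I would unwind the definitions. By \emph{Theorem 2.2}(ii), $\pi_{1}^{br}(X) = \{0\}$ exactly when the inclusion $\pi_{1}^{qc}(X)_{et} \hookrightarrow \pi_{1}^{qc}(X)$ is an equality of profinite groups. Recalling that $\pi_{1}^{qc}(X)$ is the inverse limit of $Aut(Z/X)$ over $Z \in X_{qc}\left[ k(X)^{al}\right]$, and that the \'{e}tale subgroup is built from those $Z$ which are \'{e}tale (unramified) over $X$, this equality becomes the geometric statement that every $Z \in X_{qc}\left[ k(X)^{al}\right]$ is \'{e}tale over $X$. Hence the corollary reduces to the claim: every quasi-galois closed cover in the system is \'{e}tale if and only if $X$ has no finite branched cover.

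For the direction ($\Leftarrow$), if $X$ admits no finite branched cover, then in particular every member of $X_{qc}\left[ k(X)^{al}\right]$ is unramified over $X$, so the two subgroups coincide and $\pi_{1}^{br}(X) = \{0\}$. For ($\Rightarrow$) I would argue contrapositively: given a finite branched cover $Y \to X$, I would form its quasi-galois closure $\widetilde{Y}$ inside $k(X)^{al}$ using the constructions of \cite{An2,An3}. The cover $\widetilde{Y} \to X$ then lies in $X_{qc}\left[ k(X)^{al}\right]$ and still has nontrivial branch locus, so it is not \'{e}tale. Consequently $\pi_{1}^{qc}(X)_{et}$ is a proper subgroup of $\pi_{1}^{qc}(X)$ and $\pi_{1}^{br}(X) \neq \{0\}$.

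The main obstacle will be the ($\Rightarrow$) direction: producing a quasi-galois closed cover from a given finite branched cover and verifying that ramification survives the closure process. This relies on the quasi-galois closure machinery developed in \cite{An2,An3} together with the fact that the ramification locus of $Y \to X$ pulls back to ramification on any cover dominating $Y$, so that no further enlargement can turn a branched cover into an \'{e}tale one. A secondary and more routine point is the cofinality of the \'{e}tale subfamily inside $X_{qc}\left[ k(X)^{al}\right]$, which follows because compositions and quasi-galois closures of \'{e}tale covers remain \'{e}tale; this is what legitimizes identifying $\pi_{1}^{qc}(X)_{et}$ with the inverse limit over exactly the \'{e}tale members of the system.
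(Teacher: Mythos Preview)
The paper's own proof consists of the single word ``Trivial,'' so you have supplied considerably more detail than the author. Your strategy of unwinding the definitions through Theorem~2.2 and comparing the \'{e}tale and quasi-galois closed subsystems is exactly the kind of argument the author presumably has in mind, so the approaches are aligned; just be aware that the paper itself offers no further justification against which to check the finer points (e.g.\ the survival of ramification under quasi-galois closure).
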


\begin{proof}
Trivial.
\end{proof}

\section{Proof of The Main Theorem}

\subsection{Recalling definitions and basic facts}

Let us recall some definitions and basic facts about quasi-galois closed
schemes (see \cite{An2,An3,An4}).

Let $K$ be a finitely generated extension of a field $k$. Here $K$ is not
necessarily algebraic over $k$.

\begin{definition}
(see \cite{An2}) $K$ is \textbf{quasi-galois} over $k$ if each irreducible
polynomial $f(X)\in F[X]$ that has a root in $K$ factors completely in $K%
\left[ X\right] $ into linear factors for any intermediate field $k\subseteq
F\subseteq K$.
\end{definition}

We call the elements $t_{1},t_{2},\cdots ,t_{n}\in K\setminus k $ a \textbf{$%
(r,n)-$nice basis} of $K$ over $k$ if the conditions are satisfied:

$\left( i\right) $ $L=K(t_{1},t_{2},\cdots ,t_{n})$;

$\left( ii\right) $ $t_{1},t_{2},\cdots ,t_{r}$ constitute a transcendental
basis of $L$ over $K$;

$\left( iii\right) $ $t_{r+1},t_{r+2},\cdots ,t_{n}$ are linearly
independent over $K(t_{1},t_{2},\cdots ,t_{r})$, where $0\leq r\leq n$.

\begin{definition}
(see \cite{An2}) Let $D\subseteq D_{1}\cap D_{2}$ be three integral domains.
The ring $D_{1}$ is said to be \textbf{quasi-galois} over $D$ if the field $%
Fr\left( D_{1}\right) $ is a quasi-galois extension of $Fr\left( D\right) $.
\end{definition}

\begin{definition}
(see \cite{An2}) The ring $D_{1}$ is a \textbf{conjugation} of $D_{2}$ over $%
D$ if there is a $(r,n)-$nice $k-$basis $w_{1},w_{2},\cdots ,w_{n}$ of the
field $Fr(D_{1})$ and an $F-$isomorphism $\tau _{(r,n)}:Fr(D_{1})\rightarrow
Fr(D_{2})$ of fields such that
\begin{equation*}
\tau _{(r,n)}(D_{1})=D_{2},
\end{equation*}%
where $k=Fr(D)$ and $F\triangleq k(w_{1},w_{2},\cdots ,w_{r})$ is assumed to
be contained in the intersection $Fr(D_{1})\cap Fr(D_{2})$.
\end{definition}

Let $X$ and $Y$ be two integral varieties and let $\varphi :X\rightarrow Y$
be a surjective morphism. Fixed an algebraically closed closure $\Omega $ of
the function field $k\left( X\right) $.

\begin{definition}
(see \cite{An3}) A reduced affine covering $\mathcal{C}_{X}$ of $X$ with
values in $\Omega $ is said to be \textbf{quasi-galois closed} over $Y$ by $%
\varphi$ if the below condition is satisfied:

There exists a local chart $(U_{\alpha }^{\prime },\phi _{\alpha }^{\prime
};A_{\alpha }^{\prime })\in \mathcal{C}_{X}$ such that $U_{\alpha }^{\prime
}\subseteq \varphi ^{-1}(V_{\alpha })$ for any $(U_{\alpha },\phi _{\alpha
};A_{\alpha })\in \mathcal{C}_{X}$, for any affine open set $V_{\alpha }$ in
$Y$ with $U_{\alpha }\subseteq \varphi ^{-1}(V_{\alpha })$, and for any
conjugation $A_{\alpha }^{\prime }$ of $A_{\alpha }$ over $B_{\alpha }$,
where $B_{\alpha }$ is the canonical image of $\mathcal{O}_{Y}(V_{\alpha })$
in the function field $k(Y)$.
\end{definition}

We have the following lemmas for the criterion of quas-galois closed.

\begin{lemma}
\emph{(c.f. \cite{An3})} Assume that $k(Y)$ is contained in $\Omega $. Then $%
X$ is quasi-galois closed over $Y$ if there is an affine patching $\mathcal{C%
}_{X}$ of $X$ with values in $\Omega $ such that

\begin{itemize}
\item either $\mathcal{C}_{X}$ is quasi-galois closed over $Y$,

\item or $A_{\alpha }$ has only one conjugate over $B_{\alpha}$ for any $%
(U_{\alpha },\phi _{\alpha };A_{\alpha })\in \mathcal{C}_{X}$ and for any
affine open set $V_{\alpha}$ in $Y$ with $U_{\alpha }\subseteq
\varphi^{-1}(V_{\alpha})$, where $B_{\alpha}$ is the canonical image of $%
\mathcal{O}_{ Y}(V_{\alpha})$ in the function field $k(Y)$.
\end{itemize}
\end{lemma}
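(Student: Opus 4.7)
The plan is to verify the two conditions of Definition 1.1 directly for the given affine patching $\mathcal{C}_X$. I would fix an arbitrary conjugate $Z$ of $X$ over $Y$ that admits a reduced affine covering $\mathcal{C}_Z$ with values in $\Omega$, together with a $Y$-isomorphism $\sigma:Z\to X$ witnessing the conjugacy. The first step is a translation from the scheme level to the ring level: on any chart $(U_\alpha,\phi_\alpha;A_\alpha)\in\mathcal{C}_X$ and a matching chart in $\mathcal{C}_Z$, the map induced by $\sigma$ realizes the ring coming from $\mathcal{C}_Z$ as a conjugation (in the sense of Definition 3.3) of $A_\alpha$ over the canonical image $B_\alpha$ of $\mathcal{O}_Y(V_\alpha)$ in $k(Y)$, for any affine open $V_\alpha\subseteq Y$ with $U_\alpha\subseteq\varphi^{-1}(V_\alpha)$.

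With this translation in hand, the two alternative hypotheses are handled in parallel. In the first case, where $\mathcal{C}_X$ is itself quasi-galois closed over $Y$ in the sense of Definition 3.4, that definition supplies, for every conjugation $A'_\alpha$ of $A_\alpha$ over $B_\alpha$, a chart of $\mathcal{C}_X$ with ring $A'_\alpha$ and open set inside $\varphi^{-1}(V_\alpha)$. Applying this to the charts produced from $\mathcal{C}_Z$ gives $\mathcal{C}_Z\subseteq\mathcal{C}_X$ immediately; the coincidence of the underlying opens and the containments of the local rings then force essential equality of $(X,\mathcal{O}_X)$ and $(Z,\mathcal{O}_Z)$ through the criterion recorded in the Convention subsection.

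In the second case, where each $A_\alpha$ has only one conjugate over $B_\alpha$, the conjugation induced by $\sigma$ must carry each $A_\alpha$ to a ring essentially equal to $A_\alpha$ itself. Gluing these chart-level identifications produces both the containment $\mathcal{C}_Z\subseteq\mathcal{C}_X$ and the essential equality of the two structure sheaves. Here the assumption that $\mathcal{C}_X$ is an \emph{affine patching}, so that each $\phi_\alpha$ is the identity on $U_\alpha=Spec(A_\alpha)$, is used to ensure that no coordinate change interferes with the patching.

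The step I expect to be the main obstacle is the translation carried out in the first paragraph: one must show that a $Y$-isomorphism between integral varieties restricts on each affine piece to an $F$-isomorphism in the precise sense of Definition 3.3, equipped with an $(r,n)$-nice basis and with $F$ sitting inside the intersection of the two function fields inside $\Omega$. This bookkeeping relies on the standing hypothesis $k(Y)\subseteq\Omega$ together with the identification of the function fields of $X$ and $Z$ as conjugate subfields of $\Omega$ over $k(Y)$; once it is in place, the rest of the argument reduces to applying the chosen alternative hypothesis chart by chart and invoking the essential-equality criterion.
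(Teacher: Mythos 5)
The paper's own proof of this lemma is a single sentence, ``It is immediate from definition,'' and your proposal is the natural unpacking of that assertion: fix a conjugate $Z$ admitting a reduced affine covering with values in $\Omega$, translate the $Y$-isomorphism into chart-level conjugations in the sense of Definition 3.3, and invoke whichever alternative hypothesis applies to obtain $\mathcal{C}_Z\subseteq\mathcal{C}_X$ and essential equality. You are more candid than the paper about where the real bookkeeping lies (the $F$-isomorphism and $(r,n)$-nice basis details, and the identification of underlying spaces required for essential equality), but the route is the same one the paper takes implicitly.
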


\begin{proof}
It is immediate from definition.
\end{proof}

We have the lemma below for the existence of quasi-galois closed.

\begin{lemma}
\emph{(see \cite{An3})} Let $K$ be a finitely generated extensions of a
number field and let $Y$ be an arithmetic variety with $K=k\left( Y\right) $%
. Fixed any finitely generated extensions $L$ of $K$ such that $L$ is Galois
over $K$.

Then there exists an arithmetic variety $X$ and a surjective morphism $%
f:X\rightarrow Y$ of finite type such that

\begin{itemize}
\item $L=k\left( X\right) $;

\item the morphism $f$ is affine;

\item $X$ is a quasi-galois closed over $Y$ by $f$.
\end{itemize}
\end{lemma}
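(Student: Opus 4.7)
The plan is to build $X$ by gluing affine pieces constructed as integral closures of affine opens of $Y$ inside $L$, then to verify the quasi-galois closed condition through Lemma 3.6. Fix throughout an algebraic closure $\Omega$ of $L$.

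\emph{Construction and gluing.} Pick a transcendence basis $t_1,\ldots,t_r$ of $L/K$, so $L/K(t_1,\ldots,t_r)$ is finite. Cover $Y$ by affine opens $V_\alpha=Spec(B_\alpha)$ with $B_\alpha\subset K$ and $Fr(B_\alpha)=K$. Let $R_\alpha\subset L$ be a finitely generated, $Gal(L/K)$-stable $B_\alpha$-subalgebra with $Fr(R_\alpha)=L$, obtained by starting from $B_\alpha[t_1,\ldots,t_r]$ and adjoining a finite set of Galois translates adequate to support the algebraic part of $L/K$. Set $A_\alpha$ to be the integral closure of $R_\alpha$ in $L$. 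By Noether's finiteness theorem, $A_\alpha$ is a finitely generated $B_\alpha$-algebra with $Fr(A_\alpha)=L$. Because integral closure commutes with localization, on any distinguished open $D(g)\subset V_\alpha\cap V_\beta$ the rings $(A_\alpha)_g$ and $(A_\beta)_g$ coincide. Gluing $U_\alpha=Spec(A_\alpha)$ along these identifications produces an arithmetic variety $X$ with a morphism $f:X\to Y$ that is affine by construction, surjective (going-up on the algebraic part composed with the projection from affine space), of finite type, and with $k(X)=L$.

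\emph{Quasi-galois closedness.} The family $\mathcal{C}_X=\{(U_\alpha,id;A_\alpha)\}$ is a reduced affine patching of $X$ with values in $\Omega$. Apply Lemma 3.6: it suffices that $A_\alpha$ admit a unique conjugation over $B_\alpha$ in $\Omega$. Given a conjugation $A'_\alpha$ realized by an $F$-isomorphism $\tau:L=Fr(A_\alpha)\to Fr(A'_\alpha)$ with $\tau(A_\alpha)=A'_\alpha$, where $F=K(w_1,\ldots,w_r)$ for some nice basis, the Galois hypothesis on $L/K$ forces $\tau(L)=L$, hence $\tau\in Gal(L/K)$. Since $\tau$ fixes $B_\alpha\subset K$ pointwise and $R_\alpha$ is $Gal(L/K)$-stable by construction, $\tau$ preserves $R_\alpha$ and therefore its integral closure $A_\alpha$ in $L$. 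Consequently $A'_\alpha=\tau(A_\alpha)=A_\alpha$, and Lemma 3.6 yields the conclusion.

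The main obstacle is the construction of the $Gal(L/K)$-stable finitely generated subring $R_\alpha$. Under Definition 3.1's generalized notion of Galois extension, $Gal(L/K)$ can be infinite (even for purely transcendental $L/K$), so the full $Gal(L/K)$-closure of $B_\alpha[t_1,\ldots,t_r]$ need not remain finitely generated. The workaround is to restrict attention to the part of $Gal(L/K)$ relevant to conjugations in Definition 3.3: any such conjugation is mediated by an automorphism fixing a transcendence basis pointwise, hence lies in the finite group $Aut_F(L)$ for some purely transcendental subfield $F\subset L$, and it suffices that $R_\alpha$ be stable under these finite subgroups. Once $R_\alpha$ is arranged, the remaining verifications are routine.
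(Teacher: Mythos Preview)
Your construction is genuinely different from the paper's. In \S 3.2 (which is the construction behind this lemma, recalled from \cite{An3}) the paper does not pass to integral closures: for each affine chart $(V,\psi_V,B_V)\in\mathcal{C}_Y$ it sets $A_V=B_V[\Delta_V]$ with $\Delta_V=\{\sigma(x):\sigma\in Gal(L/K),\,x\in\Delta\}$ the full Galois orbit of a fixed generating set $\Delta$, and then glues the $Spec(A_V)$ by an equivalence relation identifying points whose associated prime ideals in $L$ coincide. Your route via normalization and Noether finiteness would produce a normal $X$ and handles the gluing more classically, which is a reasonable alternative design.

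There is, however, a real gap in your verification of quasi-galois closedness. (A minor point first: the criterion you invoke is Lemma~3.5, not Lemma~3.6, which is the very statement you are proving.) The substantive issue is the assertion that ``the Galois hypothesis on $L/K$ forces $\tau(L)=L$, hence $\tau\in Gal(L/K)$''. In the paper's sense (\S 1.3), $L/K$ Galois means only that $K$ is the fixed field of $Aut(L/K)$; this does \emph{not} make $L/F$ normal for every intermediate $F=K(w_1,\dots,w_r)$ arising from a nice basis. For instance, with $K=\mathbb{Q}$, $L=\mathbb{Q}(t)$, and $F=\mathbb{Q}(t^3)$, the extension $L/K$ is Galois in the paper's sense, yet the $F$-embedding $\tau:t\mapsto \omega t$ (for $\omega$ a primitive cube root of unity) sends $L$ onto $\mathbb{Q}(\omega t)\neq L$ inside $\Omega$, and this $\tau$ satisfies all the requirements of Definition~3.3. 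So $\tau\notin Gal(L/K)$, your stability argument for $A_\alpha$ does not apply, and the appeal to the ``unique conjugate'' clause of Lemma~3.5 fails. The finiteness obstacle you flag in your last paragraph is likewise genuine, and the proposed workaround does not close it: restricting to a single finite $Aut_F(L)$ is insufficient because Definition~3.3 quantifies over \emph{all} nice bases, and the union of the corresponding finite groups need not itself be finite. (For what it is worth, the paper's construction in \S 3.2 does not explicitly confront either difficulty.)
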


We have the lemma below for the property of quasi-galois closed.

\begin{lemma}
\emph{(see \cite{An2})} Let $X$ and $Y$ be two arithmetic varieties. Assume
that $X$ is quasi-galois closed over $Y$ by a surjective morphism $\phi$ of
finite type.

Then the function field $k\left( X\right) $ is canonically a Galois
extension of $k(Y)$ and there is a group isomorphism
\begin{equation*}
{Aut}\left( X/Y\right) \cong Gal(k\left( X\right) /k(Y)).
\end{equation*}

Moreover, let $\dim X=\dim Y$. Then $\phi$ is finite and $X$ is a
pseudo-galois cover of $Y$ in the sense of Suslin-Voevodsky.
\end{lemma}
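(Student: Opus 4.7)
The plan is to localize on reduced affine charts, lift the quasi-galois closed condition on schemes to the Galois property of $k(X)/k(Y)$, identify $Aut(X/Y)$ with $Gal(k(X)/k(Y))$ by restricting to the generic point, and finally deal with the equidimensional case. First I would fix a reduced affine covering $\mathcal{C}_X = \{(U_\alpha,\phi_\alpha;A_\alpha)\}$ of $X$ with values in $\Omega = k(X)^{al}$ witnessing the quasi-galois closed hypothesis. Since $\phi$ is affine by Lemma 3.8 and surjective, for each $\alpha$ one can choose an affine open $V_\alpha = Spec(B_\alpha)$ of $Y$ with $U_\alpha \subseteq \phi^{-1}(V_\alpha)$ and an inclusion $B_\alpha \hookrightarrow A_\alpha$ inside $\Omega$; note that $Fr(A_\alpha) = k(X)$ and $Fr(B_\alpha) = k(Y)$ for every $\alpha$.

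Next, given any conjugation $A_\alpha'$ of $A_\alpha$ over $B_\alpha$ in the sense of Definition 3.3, I would glue $Spec(A_\alpha')$ with the remaining charts of $\mathcal{C}_X$ to produce a conjugate $Z$ of $X$ over $Y$ carrying a reduced affine covering with values in $\Omega$. The quasi-galois closed hypothesis then forces $Z$ essentially equal to $X$ with $\mathcal{C}_Z \subseteq \mathcal{C}_X$. From this I would deduce that $k(X)/k(Y)$ is quasi-galois in the sense of Definition 3.1, i.e.\ every irreducible polynomial over an intermediate field with a root in $k(X)$ splits completely there, and moreover that $k(Y)$ is the fixed subfield of $Gal(k(X)/k(Y))$, which is the paper's definition of a Galois extension.

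For the isomorphism, the generic-point restriction map $\rho : Aut(X/Y) \to Gal(k(X)/k(Y))$ is well-defined because $X$ is integral, and is injective because any $Y$-morphism between integral schemes is determined by its action on the function field. For surjectivity, I would take $\sigma \in Gal(k(X)/k(Y))$; then $\sigma(A_\alpha) \subseteq \Omega$ is a conjugation of $A_\alpha$ over $B_\alpha$, so by the previous paragraph $\sigma$ must permute the charts of $\mathcal{C}_X$. Because $\sigma$ is a single field automorphism of $k(X)$, these local ring isomorphisms automatically agree on overlaps $U_\alpha \cap U_\beta$ and glue into a global $Y$-automorphism of $X$, producing a preimage of $\sigma$ under $\rho$.

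Finally, when $\dim X = \dim Y$, the finitely generated Galois extension $k(X)/k(Y)$ must be algebraic, hence finite. Each $A_\alpha$ is then integral over $B_\alpha$ and, being of finite type, is a finite $B_\alpha$-module; thus $\phi$ is finite, and the pseudo-galois property of Suslin--Voevodsky reduces to the data (finite surjective with $Aut(X/Y) \cong Gal(k(X)/k(Y))$) already established. The main obstacle I anticipate is the surjectivity step: one must verify that the chart permutation induced by $\sigma$ is simultaneously compatible on every intersection $U_\alpha \cap U_\beta$, so as to yield a bona fide scheme automorphism rather than just a compatible family of local ring isomorphisms, and this is precisely where the full force of the quasi-galois closed condition, applied to every conjugation for every chart, is needed.
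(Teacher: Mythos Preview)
The paper does not actually prove this lemma: it is quoted verbatim from \cite{An2} with no argument supplied here, so there is no in-paper proof to compare your proposal against directly. The nearest substitute is the proof of \emph{Lemma~3.11}, which treats integral (rather than arithmetic) varieties and \emph{assumes} that $k(X)/k(Y)$ is already Galois; that proof establishes the isomorphism $Aut(X/Y)\cong Gal(k(X)/k(Y))$ by the same restrict-to-generic-point / glue-local-isomorphisms mechanism you propose, and derives affineness of the morphism a posteriori via \emph{Remark~1.2}. In that sense the isomorphism portion of your plan is aligned with what the paper does in the analogous situation.

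Two points to flag. First, your appeal to ``Lemma~3.8'' for the affineness of $\phi$ is a misattribution: Lemma~3.8 only produces a reduced affine covering of a single integral variety with values in $\Omega$; it says nothing about morphisms. In the paper's own argument (Lemma~3.11) affineness is a \emph{conclusion}, obtained by building the canonical model $X'$ of \S 3.2 and then invoking uniqueness of conjugates (Remark~1.2) to force $X'=X$. Second, your deduction that $k(X)/k(Y)$ is Galois (``quasi-galois and $k(Y)$ is the fixed field'') is asserted rather than argued; since this is precisely the part of Lemma~3.7 that is \emph{not} covered by Lemma~3.11, and the paper offloads it entirely to \cite{An2}, you would need to supply the actual computation showing that every $B_\alpha$-conjugation of $A_\alpha$ already lies inside $k(X)$ and that the resulting automorphisms act transitively enough to make $k(Y)$ the fixed field. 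Your sketch gestures at this via the conjugate scheme $Z$, but the step ``from this I would deduce\dots'' is where the real content lives.
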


At last we have the useful lemma below.

\begin{lemma}
\emph{(c.f. \cite{An4})} Let $X$ be an integral variety. Then there is an
integral variety $Z$ satisfying the conditions:

\begin{itemize}
\item $k\left( X\right) =k\left( Z\right) \subseteq \Omega ;$

\item $X\cong Z$ are isomorphic;

\item $Z$ has a reduced affine covering with values in $\Omega .$
\end{itemize}
\end{lemma}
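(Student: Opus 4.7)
The plan is to construct $Z$ by keeping the underlying topological space of $X$ and replacing the structure sheaf with one whose values are literal subrings of $\Omega$, exploiting the fact that an integral scheme has every affine coordinate ring canonically embedded inside its single function field $k(X)$.

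First, I would choose any reduced affine covering $\mathcal{C}_X = \{(U_\alpha, \phi_\alpha; A_\alpha)\}_{\alpha \in \Delta}$ of $X$; such a covering exists because the affine opens of $X$ form a basis and we may restrict to pairwise distinct ones. For each $\alpha$, the map $\phi_\alpha^\sharp : A_\alpha \to \mathcal{O}_X(U_\alpha)$ is an isomorphism of rings, and the canonical localisation $\mathcal{O}_X(U_\alpha) \hookrightarrow k(X)$ is injective by integrality. Composing with the given embedding $k(X) \hookrightarrow \Omega$, we obtain an injection $j_\alpha : A_\alpha \hookrightarrow \Omega$; set $B_\alpha = j_\alpha(A_\alpha) \subseteq \Omega$.

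Next, I would glue the affine schemes $Spec(B_\alpha)$ along the transition data transported from $X$. The essential observation is that on every distinguished open $D(f) \subseteq U_\alpha \cap U_\beta$ the two localisations $B_\alpha[1/j_\alpha(f)]$ and $B_\beta[1/j_\beta(f)]$ coincide as subrings of $k(X)$, because both compute $\mathcal{O}_X(D(f))$ inside $k(X)$; the gluing is therefore automatically consistent. The resulting integral scheme $Z$ has the same underlying space as $X$, and under the identification each $U_\alpha$ is literally $Spec(B_\alpha)$ with $\mathcal{O}_Z(U_\alpha) = B_\alpha$.

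Finally, I would verify the three conclusions: (a) $k(Z) = Fr(B_\alpha) = k(X) \subseteq \Omega$ since $B_\alpha \subseteq k(X)$ already has fraction field $k(X)$; (b) the family $\{(U_\alpha, \mathrm{id}_{U_\alpha}; B_\alpha)\}$ is a reduced affine covering of $Z$ with values in $\Omega$ by construction; (c) the ring isomorphisms $j_\alpha : A_\alpha \to B_\alpha$ patch together to a scheme isomorphism $X \cong Z$ because they agree on overlaps inside $k(X)$. The only nontrivial point in the whole argument is the gluing compatibility in the middle step, and the integrality of $X$ trivialises it, since every affine coordinate ring lives canonically inside the single field $k(X)$ with no room for ambiguity in the identifications.
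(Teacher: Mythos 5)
Your proof is correct. The paper dismisses this lemma with the single word ``Trivial,'' and the argument you give---embedding each affine coordinate ring of the integral scheme canonically into $k(X)\hookrightarrow\Omega$, regluing the resulting $Spec(B_\alpha)$ along the automatically compatible transition data (automatic precisely because all the rings sit inside the single field $k(X)$), and observing that the $j_\alpha$ assemble into an isomorphism $X\cong Z$---is exactly the natural argument that the authors are treating as obvious.
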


\begin{proof}
Trivial.
\end{proof}

\subsection{Recalling the construction for a geometric model}

(see \cite{An3,An4}) Let $Y$ be an arithmetic variety. Let $L$ be a Galois
extension of the function field $K=k\left( Y\right) .$ In the following we
will proceed in several steps to construct a scheme $X$ and a surjective
morphism $f:X\rightarrow Y$ such that $L=k\left( X\right) $ and $X$ is
quasi-galois closed over $Y$ by $f$.

By \emph{Lemma 3.8}, without loss of generality, assume that there is a
reduced affine covering $\mathcal{C}_{Y}$ of the scheme $Y$ with values in $%
\Omega .$

\emph{\textbf{Step 1.}} Fixed a set $\Delta $ of the generators $L$ over $K.$
That is, $L=K\left( \Delta \right) $ and $\Delta \subseteq L\setminus K.$
Put $G=Gal\left( L/K\right) .$

\emph{\textbf{Step 2.}} Take any local chart $\left( V,\psi
_{V},B_{V}\right) \in \mathcal{C}_{Y}.$ Define $A_{V}$ to be the subring of $%
L$ generated over $B_{V}$ by the set
\begin{equation*}
\Delta _{V}=\{\sigma \left( x\right) \in L:\sigma \in G,x\in \Delta \}.
\end{equation*}

We have
\begin{equation*}
A_{V}=B_{V}\left[ \Delta _{V}\right] .
\end{equation*}

Set
\begin{equation*}
i_{V}:B_{V}\rightarrow A_{V}
\end{equation*}%
to be the inclusion.

\emph{\textbf{Step 3.}} Define
\begin{equation*}
\Sigma =\coprod\limits_{\left( V,\psi _{V},B_{V}\right) \in \mathcal{C}%
_{Y}}Spec\left( A_{V}\right)
\end{equation*}
to be the disjoint union and define
\begin{equation*}
\pi _{Y}:\Sigma \rightarrow Y
\end{equation*}%
to be the projection.

Then $\Sigma $ is a topological space, where the topology $\tau _{\Sigma }$
on $\Sigma $ is naturally determined by the Zariski topologies on all $%
Spec\left( A_{V}\right) .$

\emph{\textbf{Step 4.}} Define an equivalence relation $R_{\Sigma}$ in $%
\Sigma$ in such a manner:

For any $x_{1},x_{2}\in \Sigma $, we say
$
x_{1}\sim x_{2}
$
if and only if
$
j_{x_{1}}=j_{x_{2}}
$
holds in $L$. Here $j_{x}$ denotes the corresponding prime ideal of $A_{V}$ to a point $%
x\in Spec\left( A_{V}\right) $ (see \cite{EGA}).

Define
\begin{equation*}
X=\Sigma /\sim .
\end{equation*}
Let
\begin{equation*}
\pi _{X}:\Sigma \rightarrow X
\end{equation*}
be the projection.

\emph{\textbf{Step 5.}} Define a map
\begin{equation*}
f:X\rightarrow Y
\end{equation*}%
by
\begin{equation*}
\pi _{X}\left( z\right) \longmapsto \pi _{Y}\left( z\right)
\end{equation*}%
for each $z\in \Sigma $.

\emph{\textbf{Step 6.}} Put
\begin{equation*}
\mathcal{C}_{X}=\{\left( U_{V},\varphi _{V},A_{V}\right) \}_{\left( V,\psi
_{V},B_{V}\right) \in \mathcal{C}_{Y}}
\end{equation*}%
where $U_{V}=\pi _{Y}^{-1}\left( V\right) $ and $\varphi
_{V}:U_{V}\rightarrow Spec(A_{V})$ is the identity map for each $\left(
V,\psi _{V},B_{V}\right) \in \mathcal{C}_{Y}$.

Define the scheme
$
\left( X,\mathcal{O}_{X}\right)
$
to be obtained by gluing the affine schemes $Spec\left( A_{V}\right) $ for
all local charts $\left( V,\psi _{V},B_{V}\right) \in \mathcal{C}_{Y}$ with
respect to the equivalence relation $R_{\Sigma }$ (see \cite{EGA,Hrtsh}).

Then $\left( X,\mathcal{O}_{X}\right) $ is the desired scheme and $%
f:X\rightarrow Y$ is the desired morphism of schemes. This completes the
construction.

\subsection{Any two qc varieties have a common qc cover}

Let $X$ be an arithmetic variety and let $\Omega $ be an algebraically
closed field containing $k\left( X\right) .$ We need the following lemma in
order to prove that $X_{qc}\left[ \Omega \right] $ is a directed set.

\begin{lemma}
Take any $Z_{1},Z_{2}\in X_{qc}\left[ \Omega \right] .$ There is a third $%
Z_{3}\in X_{qc}\left[ \Omega \right] $ such that $Z_{3}$ is quasi-galois
closed over $Z_{1}$ and $Z_{2},$ respectively.
\end{lemma}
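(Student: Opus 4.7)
The plan is to produce $Z_3$ by forming a single Galois extension of $k(X)$ large enough to dominate both $k(Z_1)$ and $k(Z_2)$, realize it as an arithmetic variety via Lemma 3.7, and then check that this single variety carries a quasi-galois closed structure over each of $X$, $Z_1$, $Z_2$.

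First, since each $Z_i$ is quasi-galois closed over $X$ via a surjective finite-type morphism, Lemma 3.9 gives that $k(Z_i)/k(X)$ is a finitely generated Galois extension for $i=1,2$. Both function fields sit inside the fixed algebraically closed field $\Omega$, so we may form the compositum $L\triangleq k(Z_1)\cdot k(Z_2)\subseteq \Omega$; the compositum of two Galois subextensions of $\Omega/k(X)$ is again Galois, and $L$ is finitely generated over $k(X)$. Applying Lemma 3.7 to $Y=X$ and the extension $L/k(X)$ produces an arithmetic variety $Z_3$ with $k(Z_3)=L$ together with a surjective affine morphism $f:Z_3\to X$ of finite type that makes $Z_3$ quasi-galois closed over $X$. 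By Lemma 3.10 we may replace $Z_3$ by an isomorphic integral variety admitting a reduced affine covering with values in $\Omega$, so $Z_3\in X_{qc}[\Omega]$.

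It remains to promote $Z_3$ to a quasi-galois closed cover of each $Z_i$. Because $k(Z_i)\subseteq L$ and $L/k(X)$ is Galois, the intermediate extension $L/k(Z_i)$ is also Galois and finitely generated. Applying Lemma 3.7 with $Y=Z_i$ produces an arithmetic variety $\widetilde Z_i$ with $k(\widetilde Z_i)=L$ and a surjective finite-type morphism $g_i:\widetilde Z_i\to Z_i$ making $\widetilde Z_i$ quasi-galois closed over $Z_i$, and again by Lemma 3.10 we may assume $\widetilde Z_i$ has a reduced affine covering with values in $\Omega$. Both $Z_3$ and $\widetilde Z_i$ are thus integral varieties with function field $L\subseteq \Omega$ carrying reduced affine coverings with values in $\Omega$; inspecting the construction of Section 3.2, each is obtained by gluing spectra of rings of the form $B_V[\Delta_V]$, where $\Delta_V$ is the full Galois orbit of a generating set of $L$ over the base. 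Hence $Z_3$ and $\widetilde Z_i$ are conjugates inside $\Omega$ in the sense of Definition 1.1, and Remark 1.2 forces them to be essentially equal. Transporting the morphism $g_i$ across this identification yields the required morphism $Z_3\to Z_i$ making $Z_3$ quasi-galois closed over $Z_i$, for $i=1,2$.

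The main obstacle is the last identification: a priori the two constructions (over $X$ and over $Z_i$) are carried out with different base coverings and different intermediate generators, so one has to argue that the resulting schemes coincide up to essential equality. The payoff of the explicit gluing recipe in Section 3.2 is precisely that the local models $B_V[\Delta_V]$ depend only on the Galois orbit of a generating set of $L$ over $k(X)$; refining the affine coverings of $X$ and of $Z_i$ by a common reduced $\Omega$-valued covering makes the two constructions match, after which Remark 1.2 provides the essential equality. Everything else is bookkeeping: the composition $Z_3\to Z_i\to X$ recovers $f$, and finite typeness of $Z_3\to Z_i$ follows from that of $Z_3\to X$ and $Z_i\to X$ in the standard way.
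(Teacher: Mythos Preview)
Your overall strategy---form the compositum $L=k(Z_{1})\cdot k(Z_{2})$ and realize it as a quasi-galois closed cover---is the same as the paper's, but the execution diverges and the divergence introduces a real gap. You invoke the existence lemma three separate times (once over $X$, once over each $Z_{i}$) and then try to identify the outputs via Remark~1.2. But Remark~1.2 only applies once you know that $\widetilde Z_{i}$ is a \emph{conjugate} of $Z_{3}$ over $X$, meaning isomorphic to $Z_{3}$ over $X$; that is precisely what has not been shown. Your appeal to ``inspecting the construction of Section~3.2'' does not close the gap either: you obtained $Z_{3}$ and $\widetilde Z_{i}$ from the black-box existence lemma, which gives no control over the base covering or the generating set, and the local rings $B_{V}[\Delta_{V}]$ depend on $B_{V}$---which is $\mathcal{O}_{X}(V)$ in one construction and $\mathcal{O}_{Z_{i}}(W)$ in the other---not merely on the Galois orbit of a generating set of $L$.

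The paper avoids this entirely by running the \S 3.2 construction \emph{once} with an explicit, carefully chosen generating set: for each affine chart $V\in\mathcal{C}_{X}$ it takes $\Delta=\Delta_{1}\cup\Delta_{2}$, where $\Delta_{j}$ generates $\mathcal{O}_{Z_{j}}(f_{j}^{-1}(V))$ over $\mathcal{O}_{X}(V)$, and uses $G=Gal(L/k(X))$. The resulting local rings $A_{V}=B_{V}[\Delta_{V}]$ then visibly contain each $\mathcal{O}_{Z_{j}}(f_{j}^{-1}(V))$ and are stable under the full group $G$, so Lemma~3.5 applies directly to each of $Z_{3}/X$, $Z_{3}/Z_{1}$, $Z_{3}/Z_{2}$; no separate constructions and no identification step are needed. (Also, your lemma numbers are shifted by one relative to the paper: what you cite as Lemmas~3.7, 3.9, 3.10 are the paper's Lemmas~3.6, 3.7, 3.8.)
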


\begin{proof}
Let $f_{1}:Z_{1}\rightarrow X$ and $Z_{2}\rightarrow X$ be two surjective
morphisms of finite types such that $Z_{1}/X$ and $Z_{2}/X$ are quasi-galois
closed. It is seen that $f_{1}$ and $f_{2}$ are both affine by \emph{%
Definition 1.1} and \emph{Lemma 3.6.} And it is seen that the function
fields $k\left( Z_{1}\right) /k\left( X\right) $ and $k\left( Z_{2}\right)
/k\left( X\right) $ are both Galois from \emph{Lemma 3.7}.

Without loss of
generality, by \emph{Lemma 3.8}, assume that $X$ has a reduced affine
covering $\mathcal{C}_{X}$ with values in $\Omega .$

Fixed any affine chart $V$ contained in $\mathcal{C}_{X}.$ Put $$\Delta
=\Delta _{1}\cup \Delta _{2}$$ and $$G=Gal\left( k\left( Z_{1}\right) \cdot
k\left( Z_{2}\right) /k\left( X\right) \right) .$$ Here, $\Delta _{j}$ is the
set of generators of the ring $\mathcal{O}_{Z_{j}}\left( f_{j}^{-1}\left(
V\right) \right) $ over $\mathcal{O}_{X}\left( V\right) $ for $j=1,2.$

Repeat the construction in the previous subsection \S 3.2 for the set $%
\Delta $ and the Galois group $G.$ Then we obtain an arithmetic variety $%
Z_{3}$.

By \emph{Lemma 3.5} it is clear that $Z_{3}/X,$ $Z_{3}/Z_{1},$ and $%
Z_{3}/Z_{2}$ are all quasi-galois closed. Hence, $Z_{3}\in X_{qc}\left[
\Omega \right] .$
\end{proof}

\begin{lemma}
Let $Z_{1},Z_{2},Z_{3}\in X_{qc}\left[ \Omega \right] $ be such that $%
Z_{1}/Z_{2}$ and $Z_{2}/Z_{3}$ are both quasi-galois closed. Then $%
Z_{1}/Z_{3}$ is quasi-galois closed. \
\end{lemma}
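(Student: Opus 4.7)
The plan is to combine the field-theoretic fact that the composed extension $k(Z_1)/k(Z_3)$ is Galois with the geometric criterion of Lemma 3.5, applied to an affine patching of $Z_1$ obtained by pulling back an affine covering of $Z_3$ through the tower. To set up, I first compose the two surjective finite-type morphisms $Z_1\to Z_2$ and $Z_2\to Z_3$ to produce a surjective finite-type morphism $\pi\colon Z_1\to Z_3$, the candidate quasi-galois closed structure.

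Next I would verify that $k(Z_1)/k(Z_3)$ is Galois. Lemma 3.7 gives that $k(Z_1)/k(Z_2)$ and $k(Z_2)/k(Z_3)$ are Galois. Since $Z_1,Z_3\in X_{qc}[\Omega]$ we have $k(Z_1)/k(X)$ Galois with $k(X)\subseteq k(Z_3)\subseteq k(Z_1)$, so $k(Z_1)/k(Z_3)$ is Galois as a subextension of a Galois extension over an intermediate field. Meanwhile, since quasi-galois closed morphisms are affine (by Definition 1.1 and Lemma 3.6), I may invoke Lemma 3.8 to fix a reduced affine covering $\mathcal{C}_{Z_3}$ of $Z_3$ with values in $\Omega$ and pull it back through $Z_2\to Z_3$, then through $Z_1\to Z_2$, obtaining for each local chart $(V,\psi_V,B_V)\in\mathcal{C}_{Z_3}$ a compatible tower of rings $B_V\subseteq A_{2,V}\subseteq A_{1,V}$ inside $\Omega$ and reduced affine patchings $\mathcal{C}_{Z_2}$, $\mathcal{C}_{Z_1}$.

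The core step is to verify that $\mathcal{C}_{Z_1}$ is quasi-galois closed over $Z_3$ in the sense of Definition 3.4. Given any conjugation $A'$ of $A_{1,V}$ over $B_V$, realized by some $\sigma\in\mathrm{Gal}(k(Z_1)/k(Z_3))$, I would restrict $\sigma$ to $k(Z_2)$---which is well-defined because $k(Z_2)/k(Z_3)$ is Galois---so that $\sigma(A_{2,V})$ is a conjugation of $A_{2,V}$ over $B_V$. The quasi-galois closed property of $Z_2/Z_3$ applied to $\mathcal{C}_{Z_2}$ then identifies this conjugation with some $A_{2,V'}$ essentially. Consequently $\sigma(A_{1,V})$ becomes a conjugation of $A_{1,V'}$ over $A_{2,V'}$, and the quasi-galois closed property of $Z_1/Z_2$ places it in $\mathcal{C}_{Z_1}$. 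Invoking Lemma 3.5 then concludes that $Z_1$ is quasi-galois closed over $Z_3$ via $\pi$.

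The principal obstacle is the bookkeeping at the interface of the two tower stages: one must carefully track that the essential equality produced at the middle stage is compatible with the embedding into $\Omega$, so the upper-stage quasi-galois closed property can be invoked on the corresponding affine chart, and that the morphism $\pi$ obtained by composition genuinely realizes the subfield chain $k(Z_3)\subseteq k(Z_2)\subseteq k(Z_1)$ inside $\Omega$. Once this compatibility is nailed down, the transitivity follows mechanically from the two levels of the defining property.
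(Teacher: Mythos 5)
Your proposal follows the same route as the paper---compose the two surjective finite-type morphisms to get an affine surjective finite-type morphism $Z_1 \to Z_3$, then invoke Lemma 3.5---but you supply the verification of Lemma 3.5's hypothesis (tracking conjugations through the tower to show the pulled-back affine patching of $Z_1$ is quasi-galois closed over $Z_3$ in the sense of Definition 3.4), which the paper's proof simply asserts without detail. One small caution: your opening remark that Lemma 3.7 gives $k(Z_1)/k(Z_2)$ and $k(Z_2)/k(Z_3)$ Galois cannot, on its own, yield $k(Z_1)/k(Z_3)$ Galois, since towers of Galois extensions need not be Galois; it is your second argument (that $k(Z_1)/k(Z_3)$ is a subextension of the Galois extension $k(Z_1)/k(X)$ over the intermediate field $k(Z_3)$) that actually carries the weight. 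In any case Galoisness of $k(Z_1)/k(Z_3)$ is not required to apply Lemma 3.5, so that step could simply be dropped.
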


\begin{proof}
Let $f_{1}:Z_{1}\rightarrow Z_{2}$ and $f_{2}:Z_{2}\rightarrow
Z_{3} $ be surjective morphisms of finite types by which $Z_{1}/Z_{2}$ and $%
Z_{2}/Z_{3}$ are both quasi-galois closed. Put $f_{3}=f_{2}\circ f_{1}.$

Then $f_{3}:Z_{1}\rightarrow Z_{3}$ is a surjective morphism of finite type.
It is seen that $f_{3}$ is affine since $f_{1}$ and $f_{2}$ are both affine.
By \emph{Lemma 3.5} it follows that $Z_{1}/Z_{3}$ is quasi-galois closed by $%
f_{3}$.
\end{proof}

\subsection{A property for qc integral varieties}

There is a result for integral varieties which is similar to arithmetic
varieties (see Lemma 3.7 above or \cite{An2}).

\begin{lemma}
Let $f:X\rightarrow Y$ be a surjective morphism of integral varieties.
Suppose that $X/Y$ is quasi-galois closed by $f$ and that $k\left( X\right) $
is canonically a Galois extension over $k\left( Y\right) .$ Then $f$ is
affine and there is a group isomorphism
\begin{equation*}
Aut\left( X/Y\right) \cong Gal\left( k\left( X\right) /k\left( Y\right)
\right) .
\end{equation*}
\end{lemma}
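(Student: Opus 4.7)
The plan is to imitate the proof of \emph{Lemma 3.7} (the arithmetic-variety version), observing that the finite-type hypothesis was used there only to conclude the extra clauses about $\dim$, finiteness, and the pseudo-galois cover property, none of which appear in the present statement. By hypothesis together with \emph{Definition 1.1}, fix an algebraically closed field $\Omega \supseteq k(X)$ and a reduced affine covering $\mathcal{C}_{X}$ of $X$ with values in $\Omega$ which witnesses quasi-galois closedness, and by \emph{Lemma 3.8} assume $Y$ also carries a reduced affine covering $\mathcal{C}_{Y}$ with values in $\Omega$, with $k(Y)\subseteq k(X)\subseteq \Omega$. Invoking \emph{Lemma 3.5}, pass to an affine patching compatible with the construction recalled in \S 3.2, so that the local charts of $X$ over a chart $(V,\psi_{V},B_{V})\in\mathcal{C}_{Y}$ are exactly of the form $(U_{V},\varphi_{V},A_{V})$ with $A_{V}=B_{V}[\Delta_{V}]$ and $\Delta_{V}=\{\sigma(x):\sigma\in G,\,x\in\Delta\}$, where $G=\mathrm{Gal}(k(X)/k(Y))$ and $\Delta$ is a generating set for $k(X)$ over $k(Y)$.

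Affineness of $f$ is then read off directly from this description: for each affine open $V\subseteq Y$, the preimage $f^{-1}(V)$ is exactly the single local chart $U_{V}\cong\mathrm{Spec}(A_{V})$ produced by the \S 3.2 gluing, so $f^{-1}(V)$ is affine. Crucially this argument does not invoke finite generation of $A_{V}$ over $B_{V}$; only that $A_{V}$ is a well-defined subring of $\Omega$, which is the case for any integral variety.

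To produce the isomorphism, I would define $\Phi\colon \mathrm{Aut}(X/Y)\to \mathrm{Gal}(k(X)/k(Y))$ by passing to generic points: any $\sigma\in \mathrm{Aut}(X/Y)$ induces a $k(Y)$-automorphism of the function field $k(X)$. Injectivity of $\Phi$ follows because $X$ is integral and reduced, so an automorphism of $X$ over $Y$ is determined by its action on the generic point (equivalently, on the stalks at generic points of affine charts, which embed into $k(X)$). For surjectivity, take $\tau\in G$; then for each chart $(U_{V},\varphi_{V},A_{V})$ the image $\tau(A_{V})$ is a subring of $L=k(X)\subseteq\Omega$ lying over $B_{V}$ and is, by construction of $\Delta_{V}$, a conjugation of $A_{V}$ over $B_{V}$ in the sense of \emph{Definition 3.4}. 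Quasi-galois closedness of $X/Y$ forces $\tau(A_{V})=A_{V}$ (or, more precisely, essentially equal to $A_{V}$ via \emph{Remark 1.2}), so $\tau$ restricts to a ring automorphism of each $A_{V}$ fixing $B_{V}$, hence to an automorphism $\tau_{V}$ of $\mathrm{Spec}(A_{V})$ over $V$. These local automorphisms are compatible on overlaps since they are all restrictions of the single field automorphism $\tau$ of $L$, so they glue to a global $\widetilde{\tau}\in\mathrm{Aut}(X/Y)$ with $\Phi(\widetilde{\tau})=\tau$.

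The main obstacle is the surjectivity step, and within it the point that quasi-galois closedness only supplies essential equality of schemes rather than literal equality of rings. I would handle this by working inside the ambient field $\Omega$: because every $\tau(A_{V})$ is a subring of $\Omega$ conjugate to $A_{V}$ over $B_{V}$, the second bullet of \emph{Definition 1.1} forces the reduced affine covering of the conjugate to be contained in $\mathcal{C}_{X}$, which in a reduced covering indexed by the charts of $\mathcal{C}_{Y}$ means $\tau(A_{V})$ literally coincides with $A_{V}$ as a subring of $\Omega$. Once this identification is in place, the compatibility on overlaps and the fact that $\Phi(\widetilde{\tau})=\tau$ are formal, and the lemma follows.
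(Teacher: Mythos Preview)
Your proposal is correct and follows essentially the same route as the paper's own proof: build the \S 3.2 model $X'$ from $G=\mathrm{Gal}(k(X)/k(Y))$ and a generating set $\Delta$, identify $X$ with $X'$ via the uniqueness of quasi-galois closed conjugates (the paper cites \emph{Remark 1.2} for this step, which you invoke only later---you should cite it explicitly at the point where you ``pass to an affine patching compatible with \S 3.2,'' since \emph{Lemma 3.5} alone only tells you $X'$ is quasi-galois closed, not that $X'=X$), read off affineness of $f$ from the model, and then run the injectivity/surjectivity argument for the map to the Galois group by restricting field automorphisms to the $G$-stable rings $A_V=B_V[\Delta_V]$ and gluing. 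Your final paragraph handling the ``essentially equal versus literally equal'' subtlety via the second bullet of \emph{Definition 1.1} is in fact more explicit than the paper's corresponding passage.
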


\begin{proof}
By \emph{Lemma 3.8}, without loss of generality, assume that $X$ and $Y$
have reduced affine coverings $\mathcal{C}_{X}$ and $\mathcal{C}_{Y}$ with
values in $\Omega =k\left( X\right) ^{al},$ respectively.

Let $$\Delta \subseteq k\left( X\right) \setminus k\left( Y\right) $$ be the
set of generators of $k\left( X\right) $ over $k\left( Y\right) .$ Put $$
G=Gal\left( k\left( X\right) /k\left( Y\right) \right) .$$

Repeat the construction for a geometric model in \S 3.2, we obtain an
integral variety $X^{\prime }$ such that $A_{\alpha }$ has only one
conjugate over $B_{\alpha }$ for any $(U_{\alpha },\phi _{\alpha };A_{\alpha
})\in \mathcal{C}_{X}$ and for any affine open set $V_{\alpha }$ in $Y$ with
$U_{\alpha }\subseteq \varphi ^{-1}(V_{\alpha })$, where $B_{\alpha }=%
\mathcal{O}_{Y}(V_{\alpha })$.

By \emph{Lemma 3.5} it is seen that $X^{\prime }/Y$ is quasi-galois closed.
It follows that $X^{\prime }=X$ from \emph{Remark 1.2}. Hence, $f$ must be
affine.

Now define a mapping
\begin{equation*}
t:Aut\left( X/Y\right) \longrightarrow Gal\left( k\left( X\right) /k\left(
Y\right) \right)
\end{equation*}%
by
\begin{equation*}
\sigma =(\sigma ,\sigma ^{\sharp })\longmapsto t(\sigma )=\left\langle
\sigma ,\sigma ^{\sharp -1}\right\rangle
\end{equation*}%
where $\left\langle \sigma ,\sigma ^{\sharp -1}\right\rangle $ is the map of
$k(X)$ into $k(X)$ given by
\begin{equation*}
\left( U,f\right) \in \mathcal{O}_{X}(U)\subseteq k\left( X\right)
\longmapsto \left( \sigma \left( U\right) ,\sigma ^{\sharp -1}\left(
f\right) \right) \in \mathcal{O}_{X}(\sigma (U))\subseteq k\left( X\right)
\end{equation*}%
for any $f\in \mathcal{O}_{X}|_{U}(U)$, where $U$ runs through all open sets
in $X.$

It is easily seen that $t$ is well-defined.

As we have done for the proof of the main theorem in \cite{An2}, in the
following we will proceed in several steps to prove that $t$ is a group
isomorphism.

\emph{Step 1.} Prove that ${t}$ is injective. Take any $\sigma ,\sigma
^{\prime }\in {Aut}\left( X/Y\right) $ such that $t\left( \sigma \right)
=t\left( \sigma ^{\prime }\right) .$ We have
\begin{equation*}
\left( \sigma \left( U\right) ,\sigma ^{\sharp -1}\left( f\right) \right)
=\left( \sigma ^{\prime }\left( U\right) ,\sigma ^{\prime \sharp -1}\left(
f\right) \right)
\end{equation*}%
for any $\left( U,f\right) \in k\left( X\right) .$ In particular, for any $%
f\in \mathcal{O}_{X}(U_{0})$ we have
\begin{equation*}
\left( \sigma \left( U_{0}\right) ,\sigma ^{\sharp -1}\left( f\right)
\right) =\left( \sigma ^{\prime }\left( U_{0}\right) ,\sigma ^{\prime \sharp
-1}\left( f\right) \right)
\end{equation*}%
where $U_{0}$ is an affine open subset of $X$ such that $\sigma \left(
U_{0}\right) $ and $\sigma ^{\prime }\left( U_{0}\right) $ are both
contained in $\sigma \left( U\right) \cap \sigma ^{\prime }\left( U\right) $.

It is seen that
\begin{equation*}
\sigma |_{U_{0}}=\sigma ^{\prime }|_{U_{0}}
\end{equation*}%
holds as isomorphisms of schemes. As $U_{0}$ is dense in $X$, we have
\begin{equation*}
\sigma =\sigma |_{\overline{U_{0}}}=\sigma ^{\prime }|_{\overline{U_{0}}%
}=\sigma ^{\prime }
\end{equation*}%
on the whole of $X$; then $$\sigma \left( U\right) =\sigma ^{\prime }\left(
U\right) ;$$ it follows that $$\sigma =\sigma ^{\prime }$$ holds.

\emph{Step 2.} Prove that ${t}$ is surjective. Fixed any element $\rho $ of
the group $Gal\left( k\left( X\right) /k\left( Y\right) \right) $.

As $k(X)=\{(U_{f},f):f\in \mathcal{O}_{X}(U_{f})\text{ and }U_{f}\subseteq X%
\text{ is open}\}$, we have
\begin{equation*}
\rho :\left( U_{f},f\right) \in k\left( X\right) \longmapsto \left( U_{\rho
\left( f\right) },\rho \left( f\right) \right) \in k\left( X\right) ,
\end{equation*}
where $U_{f}$ and $U_{\rho (f)}$ are open sets in $X$, $f$ is contained in $%
\mathcal{O}_{X}(U_{f})$, and $\rho (f)$ is contained in $\mathcal{O}%
_{X}(U_{\rho (f)})$.

In fact, we prove that each element of $Gal\left( k\left( X\right) /k\left(
Y\right) \right) $ give us a unique element of ${Aut}(X/Y)$:

Fixed any affine open set $V$ of $Y$. It is easily seen that for each affine
open set $U\subseteq \phi ^{-1}(V)$ there is an affine open set $U_{\rho }$
in $X$ such that $\rho $ determines an isomorphism $\lambda _{U}$ between
affine schemes $(U,\mathcal{O}_{X}|_{U})$ and $(U_{\rho },\mathcal{O}%
_{X}|_{U_{\rho }})$.

Take any affine open sets $V\subseteq Y$. It is seen that
\begin{equation*}
\lambda _{U}|_{U\cap U^{\prime }}=\lambda _{U^{\prime }}|_{U\cap U^{\prime }}
\end{equation*}%
holds as morphisms of schemes for any affine open sets $U,U^{\prime
}\subseteq \phi ^{-1}(V)$.

By gluing $\lambda _{U}$ along all such affine open subsets $U$, we have an
automorphism $\lambda $ of the scheme $X$ such that $\lambda |_{U}=\lambda
_{U}$ for any affine open set $U$ in $X$. It is easily seen that $t\left(
\lambda \right) =\rho $. Hence, ${t}$ is surjective. This completes the
proof.
\end{proof}

\begin{remark}
Let $X$ and $Y$ be integral varieties and let $X$ be quasi-galois closed
over $Y$ by a surjective morphism $\phi $. Then there is a natural
isomorphism
\begin{equation*}
\mathcal{O}_{Y}\cong \phi _{\ast }(\mathcal{O}_{X})^{{Aut}\left( X/Y\right) }
\end{equation*}%
where $(\mathcal{O}_{X})^{{Aut}\left( X/Y\right) }(U)$ denotes the invariant
subring of $\mathcal{O}_{X}(U)$ under the natural action of ${Aut}\left(
X/Y\right) $ for any open subset $U$ of $X$.
\end{remark}

\subsection{A universal cover for the qc fundamental group}

Let $X$ be an arithmetic variety and let $\Omega $ be an algebraically
closed field such that $k\left( X\right) ^{al}\subseteq \Omega .$ Without
loss of generality, assume that $X$ has a reduced affine covering with
values in $k\left( X\right) ^{al}.$

Put $$G=Gal\left( \Omega /k\left( X\right) \right) $$ and take a set $$\Delta
\subseteq \Omega \setminus k\left( X\right) $$ of generators of $\Omega $
over $k\left( X\right) .$ Repeat the construction in \S 3.2, we have an
integral variety $X_{\Omega }$ such as in the following lemma.

\begin{lemma}
Let $X$ be an arithmetic variety and let $\Omega $ be an algebraically
closed field containing $k\left( X\right) ^{al}.$ Then there is an integral
variety $X_{\Omega }$ and a surjective morphism $f_{\Omega }:X_{\Omega
}\rightarrow X$ satisfying the conditions:

\begin{itemize}
\item $k\left( X_{\Omega }\right) =\Omega $;

\item $f_{\Omega }$ is affine;

\item $k\left( X_{\Omega }\right) $ is Galois over $k\left( X\right) ;$

\item $X_{\Omega }/X$ is quasi-galois closed by $f_{\Omega }$.
\end{itemize}
\end{lemma}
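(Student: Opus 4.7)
The strategy is to apply the construction of \S3.2 verbatim to the (possibly non-algebraic) Galois extension $\Omega/k(X)$, using the Galois group $G=\mathrm{Gal}(\Omega/k(X))$ and a generating set $\Delta\subseteq\Omega\setminus k(X)$ of $\Omega$ over $k(X)$, and then to verify the four listed conditions.

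Before invoking the construction I would confirm that $\Omega/k(X)$ is a Galois extension in the sense of \S1.3, i.e.\ that $k(X)$ is the fixed field of $G$. Since $\Omega$ is algebraically closed and contains $k(X)^{al}$, for any $x\in\Omega\setminus k(X)$ that is algebraic over $k(X)$ there is a nontrivial $k(X)$-automorphism of $k(X)^{al}$ moving $x$, which extends to $\Omega$ by fixing a chosen transcendence basis of $\Omega$ over $k(X)^{al}$; while for $x$ transcendental over $k(X)$, an automorphism of $k(X)(x)$ followed by an extension across a transcendence complement realises an element of $G$ moving $x$. Thus the fixed field of $G$ is exactly $k(X)$, so $\Omega/k(X)$ is Galois.

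Next I would invoke Lemma 3.8 to reduce to the case that $X$ has a reduced affine covering $\mathcal{C}_X$ with values in $k(X)^{al}\subseteq\Omega$, and then run Steps 1--6 of \S3.2: for each local chart $(V,\psi_V,B_V)\in\mathcal{C}_X$ set $A_V=B_V[\Delta_V]$ with $\Delta_V=\{\sigma(x):\sigma\in G,\,x\in\Delta\}$, form $\Sigma=\coprod_V\mathrm{Spec}(A_V)$, pass to the quotient $X_\Omega=\Sigma/{\sim}$ by the relation identifying primes with the same image in $\mathrm{Spec}(\Omega)$, and glue to a scheme with projection $f_\Omega:X_\Omega\to X$. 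The family $\{(U_V,\mathrm{id},A_V)\}$ with $U_V=f_\Omega^{-1}(V)$ is then an affine patching of $X_\Omega$ with values in $\Omega$. Integrality of $X_\Omega$ is automatic because each $A_V$ is a domain inside $\Omega$ and the charts glue along their common generic point; the identity $k(X_\Omega)=\Omega$ holds because $\mathrm{Fr}(A_V)\supseteq k(X)(\Delta)=\Omega$ while $A_V\subseteq\Omega$; and affineness of $f_\Omega$ is immediate from the patching.

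Finally, the quasi-galois closed condition is verified by the second criterion of Lemma 3.5: since $\Delta_V$ is by construction stable under $G$, the ring $A_V$ has only one conjugate over $B_V$ in $\Omega$ at each local chart, so the patching $\mathcal{C}_{X_\Omega}$ satisfies the hypothesis of that lemma. The main technical obstacle is the preliminary Galois check, since $\Omega/k(X)$ may be of infinite transcendence degree and is not covered by the finitely generated situation of \S1.3; this is overcome by the Zorn-style extension of $k(X)$-automorphisms across a chosen transcendence basis of $\Omega$ over $k(X)^{al}$, which provides enough elements of $G$ to pin down $k(X)$ as the fixed field.
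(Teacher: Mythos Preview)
Your proposal is correct and follows essentially the same approach as the paper: the paper's proof of this lemma is just the sentence preceding it, namely ``put $G=\mathrm{Gal}(\Omega/k(X))$, take a generating set $\Delta\subseteq\Omega\setminus k(X)$, and repeat the construction in \S3.2,'' after which the four properties are read off. Your write-up is more detailed---in particular you make explicit the verification that $\Omega/k(X)$ is Galois in the sense of \S1.3 and you invoke the second criterion of Lemma~3.5 for the quasi-galois closed condition---but the underlying route is identical to the paper's.
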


Such an integral variety $X_{\Omega }$ is called a \textbf{geometric model}
for the qc fundamental group $\pi _{1}^{qc}\left( X;\Omega \right)$ or a
\textbf{universal cover} over $X$ for the group $\pi _{1}^{qc}\left(
X;\Omega \right)$.

\subsection{A universal cover for the \'{e}tale fundamental group}

Let $X$ be an arithmetic variety and let $s$ be a geometric point of $X$
over $k\left( X\right) ^{al}.$ Without loss of generality, assume that $X$
has a reduced affine covering with values in $k\left( X\right) ^{al}.$

Put $$
\Omega _{et}=k\left( X\right) ^{un};$$
$$G=Gal\left( \Omega _{et}/k\left( X\right) \right) .$$ Take a set $$\Delta
\subseteq \Omega _{et}\setminus k\left( X\right)$$ of generators
of $\Omega _{et}$ over $k\left( X\right) .$ Repeat the construction in \S %
3.2, we have an integral variety $X_{\Omega _{et}}$ such as in the following
lemma.

\begin{lemma}
Let $X$ be an arithmetic variety and let $s$ be a geometric point of $X$
over $k\left( X\right) ^{al}.$ Then there is an integral variety $X_{\Omega
_{et}}$ and a surjective morphism $f_{\Omega _{et}}:X_{\Omega
_{et}}\rightarrow X$ satisfying the conditions:

\begin{itemize}
\item $k\left( X_{\Omega _{et}}\right) =\Omega _{et}$;

\item $f_{\Omega _{et}}$ is affine;

\item $k\left( X_{\Omega _{et}}\right) $ is Galois over $k\left( X\right) ;$

\item $X_{\Omega _{et}}/X$ is quasi-galois closed by $f_{\Omega _{et}}$.
\end{itemize}
\end{lemma}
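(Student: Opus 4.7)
The plan is to apply the geometric model construction of \S 3.2 verbatim, substituting $L = \Omega_{et} = k(X)^{un}$ for the Galois extension of $K = k(X)$. The one preliminary I would dispatch first is verifying that $\Omega_{et}/k(X)$ is Galois: being unramified is stable under conjugation by $\mathrm{Gal}(k(X)^{al}/k(X))$, so the union of all finite unramified subextensions is a Galois subextension of $k(X)^{al}/k(X)$.

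Having fixed by Lemma 3.8 a reduced affine covering $\mathcal{C}_X = \{(V,\psi_V,B_V)\}$ of $X$ with values in $k(X)^{al}$, I would carry out Steps 1--6 of \S 3.2 with $G = \mathrm{Gal}(\Omega_{et}/k(X))$ and a generating set $\Delta \subseteq \Omega_{et} \setminus k(X)$: on each chart set $A_V = B_V[\Delta_V]$ where $\Delta_V = \{\sigma(x) : \sigma \in G,\ x \in \Delta\}$; form $\Sigma = \coprod_V \mathrm{Spec}(A_V)$ with the natural projection to $X$; impose the equivalence relation identifying points whose prime ideals agree as ideals of subrings of the common ambient field $\Omega_{et}$; and glue to obtain $(X_{\Omega_{et}}, \mathcal{O}_{X_{\Omega_{et}}})$ together with the induced morphism $f_{\Omega_{et}}: X_{\Omega_{et}} \to X$.

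Verification of the four listed properties is then routine. On every chart $\mathrm{Fr}(A_V) = \mathrm{Fr}(B_V)(\Delta_V) = k(X)(\Delta) = \Omega_{et}$, so $X_{\Omega_{et}}$ is integral with $k(X_{\Omega_{et}}) = \Omega_{et}$, giving the first condition; and the composition with $X \to \mathrm{Spec}(\mathbb{Z})$ is surjective, so $X_{\Omega_{et}}$ is an integral variety in the sense of \S 1.1. The second condition, that $f_{\Omega_{et}}$ is affine, is immediate since the preimage of $V$ is exactly $\mathrm{Spec}(A_V)$. The third condition was dispatched above. For the fourth, observe that the constructed affine patching $\{(U_V,\varphi_V,A_V)\}$ satisfies the hypothesis of Lemma 3.5: for any $B_V$-conjugation $A_V'$ of $A_V$ inside $\Omega_{et}$, the underlying field isomorphism extends to some $\sigma \in G$, and then $\sigma(A_V) = B_V[\sigma(\Delta_V)] = B_V[\Delta_V] = A_V$ because $\Delta_V$ is $G$-stable; hence $A_V$ has only one conjugate over $B_V$, and the second alternative of Lemma 3.5 yields that $X_{\Omega_{et}}/X$ is quasi-galois closed by $f_{\Omega_{et}}$.

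The main obstacle I anticipate is bookkeeping rather than substance: the extension $\Omega_{et}/k(X)$ is generally of infinite transcendence degree, so one must check that Steps 3--6 of \S 3.2 (the disjoint union, the equivalence relation, and the gluing) really do produce an integral scheme rather than breaking integrality or separatedness of the fibers over $X$. Because every $A_V$ is canonically embedded in the single field $\Omega_{et}$ and the equivalence relation is defined in terms of prime ideals inside that common field, integrality of the glued scheme and the identification of its function field with $\Omega_{et}$ follow exactly as in the finite-type case treated in Lemma 3.11.
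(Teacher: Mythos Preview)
Your approach is exactly the paper's: the text immediately preceding the lemma simply says to repeat the construction of \S 3.2 with $\Omega_{et}=k(X)^{un}$, $G=\mathrm{Gal}(\Omega_{et}/k(X))$, and a chosen generating set $\Delta$, and your verification of the four bullets via the second alternative of Lemma~3.5 is the intended one. One small correction to your final paragraph: by the paper's definition $\Omega_{et}=k(X)^{un}\subseteq k(X)^{al}$, so $\Omega_{et}/k(X)$ is algebraic (possibly of infinite degree) rather than of infinite transcendence degree; the bookkeeping concern you raise is thus the same infinite-algebraic-extension issue already implicit in Lemma~3.13, not a transcendence issue.
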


Such an integral variety $X_{\Omega _{et}}$ is called a \textbf{geometric
model} for the \'{e}tale fundamental group $\pi _{1}^{et}\left( X;s\right) $ or
a \textbf{universal cover} over $X$ for the group $\pi _{1}^{et}\left(
X;s\right) $.

\subsection{Proof of the main theorem}

Now we can prove the main theorem of the paper.

\begin{proof}
(\textbf{Proof of Theorem 2.2}) We will proceed in several steps.

\emph{Step 1.} By \emph{Lemma 3.13} we have
\begin{equation*}
\begin{array}{l}
Gal\left( \Omega /k\left( X\right) \right)\\

 \cong {\lim_{\longleftarrow }}%
_{Z\in X_{qc}\left[ \Omega \right] }{Gal\left( k\left( Z\right) /k\left(
X\right) \right) }\\

\cong {\lim_{\longleftarrow }}_{Z\in X_{qc}\left[ \Omega \right] }{Aut\left( Z/X\right) }\\

={\pi }_{1}^{qc}\left( X;\Omega \right) .
\end{array}
\end{equation*}

\emph{Step 2.} Let $X_{et}\left[ s\right] $ be the set of finite \'{e}tale
Galois covers of $X$ with respect to the geometric point $s$ (see ). For any $%
Z_{1},Z_{2}\in X_{et}\left[ s\right] $ we say $$Z_{1}\leq Z_{2}$$ if and only
if $Z_{2}$ is a finite \'{e}tale Galois cover of $Z_{1}.$ Then $X_{et}\left[ s%
\right] $ is a partially ordered set. Put
$$X_{qc}\left[ \Omega ;s\right] =X_{qc}\left[ \Omega \right] \cap X_{et}%
\left[ s\right] .$$

Let $Z_{1},Z_{2}\in X_{qc}\left[ \Omega ;s\right] .$ From \emph{Lemma 3.5}
it is easily seen that $Z_{2}/Z_{1}$ is a finite \'{e}tale Galois cover if and
only if $Z_{2}/Z_{1}$ is quasi-galois closed. Hence, the two partial orders $\leq $ in $%
X_{qc}\left[ \Omega ;s\right] $ coincide with each other.

It follows that $X_{qc}\left[ \Omega ;s\right] $ and $X_{et}\left[ s%
\right] $ are cofinal directed sets according to the construction in \S\ 3.2.

\emph{Step 3.} By the construction in \S\ 3.2 again, for the universal
covers we have $$X_{\Omega }={\lim_{\rightarrow }}_{Z\in X_{qc}\left[ \Omega %
\right] }Z$$ and $$X_{\Omega _{et}}={\lim_{\rightarrow }}_{Z\in X_{et}\left[ s%
\right] }Z$$ as direct limits.

From \emph{Lemma 3.14} we have
\begin{equation*}
\begin{array}{l}
Gal\left( \Omega _{et}/k\left(
X\right) \right) \\

\cong {\lim_{\longleftarrow }}_{Z\in X_{et}\left[ s\right] }{Gal\left(
k\left( Z\right) /k\left( X\right) \right) }\\

\cong {\lim_{\longleftarrow }}_{Z\in X_{et}\left[ s\right] }{Aut\left(
Z/X\right) }\\

\cong {\pi }_{1}^{et}\left( X;s\right) .
\end{array}
\end{equation*}

On the other hand, by \emph{Step 2} we have
\begin{equation*}
\begin{array}{l}
{\lim_{\longleftarrow }}_{Z\in X_{et}\left[ s\right] }{Gal\left( k\left(
Z\right) /k\left( X\right) \right) }\\

\cong Gal\left( \Omega _{et}/k\left( X\right) \right) \\

\cong {\lim_{\longleftarrow }}_{Z\in X_{qc}\left[ \Omega ;s\right] }{%
Gal\left( k\left( Z\right) /k\left( X\right) \right) .}
\end{array}
\end{equation*}

Hence,
$${\pi }_{1}^{et}\left( X;s\right) \cong Gal\left( \Omega _{et}/k\left(
X\right) \right) .$$

It follows that ${\pi }_{1}^{et}\left( X;s\right) $ is a subgroup of ${\pi }%
_{1}^{qc}\left( X;\Omega \right) .$
It is evident that ${\pi }_{1}^{et}\left( X;s\right) $ is normal in the group ${\pi }%
_{1}^{qc}\left( X;\Omega \right) $ since $k\left( X_{\Omega _{et}}\right)$ is a subfield of
$k\left( X_{\Omega }\right) $ and $k\left( X_{\Omega _{et}}\right) $
is Galois over $k\left( X\right) $.

This completes the proof.
\end{proof}

\newpage

\end{document}